\documentclass[a4paper,12pt]{amsart}
\baselineskip=12pt
\evensidemargin= 0 cm \oddsidemargin= 0 cm \topmargin -0.5cm
\textheight 23.5cm \textwidth 16.3cm

\usepackage{graphicx}
\usepackage{amsmath,amssymb,amsthm,amsfonts}
\usepackage{bbm}
\usepackage{amssymb}
\usepackage[active]{srcltx}
\usepackage[colorlinks=true]{hyperref}


\newtheorem{thm}{Theorem}[section]
\newtheorem{cor}[thm]{Corollary}
\newtheorem{lem}[thm]{Lemma}
\newtheorem{prop}[thm]{Proposition}

\newcommand{\R}{{\mathbb{R}}}

\newcommand{\dx}{\mathrm{d}x}

\numberwithin{equation}{section}

\begin{document}

\parindent 0pc
\parskip 6pt
\overfullrule=0pt

\title[Notes on Overdetermined Singular Problems]{Notes on Overdetermined Singular Problems}

\author{Francesco Esposito*, Berardino Sciunzi* and Nicola Soave$^+$}

\date{\today}

\address{* Università della Calabria, Dipartimento di Matematica e Informatica, Ponte Pietro  Bucci 31B, 87036 Arcavacata di Rende, Cosenza, Italy.}

\email{francesco.esposito@unical.it; sciunzi@mat.unical.it}

\address{+ Università degli studi di Torino, Dipartimento di Matematica, Via Carlo Alberto 10, 10123 Torino, Italy.}

\email{nicola.soave@unito.it}

\keywords{Semilinear elliptic equations, singular solutions,
qualitative properties, overdetermined problems}

\subjclass[2020]{35J61, 35B06, 35B50}

\thanks{\emph{Acknowledgements.} F. E. and B. S. are partially supported by PRIN project 2017JPCAPN \emph{Qualitative and quantitative aspects of nonlinear PDEs} and by the INdAM - GNAMPA Project 2023 \emph{Variational and non-variational problems with lack of compactness}. \\
N. S. is partially supported by the PRIN 2022 project  2022R537CS \emph{$NO^3$ - Nodal Optimization, NOnlinear elliptic equations, NOnlocal geometric problems, with a focus on regularity} and by the INdAM - GNAMPA Project 2023, cod. CUP\_E53C22001930001 \emph{Regolarit\`a e singolarit\`a in problemi con frontiere libere}}


\begin{abstract}
We obtain some rigidity results for overdetermined boundary value problems for singular solutions in bounded domains.
\end{abstract}

\maketitle

\date{\today}

\section{Introduction}

The aim of this paper is to study qualitative properties of singular solutions to the following overdetermined problem
\begin{equation} \tag{$\mathcal{O}$} \label{problem}
\begin{cases}
-\Delta u\,=f(u)& \text{in}\quad\Omega \setminus \{0\}  \\
u> 0 &  \text{in}\quad\Omega\setminus \{0\}  \\
u=0 &  \text{on}\quad\partial \Omega\\
\partial_\nu u = \alpha < 0 &  \text{on}\quad\partial \Omega, \,
\end{cases}
\end{equation}
where $\Omega$ is a bounded smooth domain of $\mathbb{R}^n$ with
$n\geq 2$ (in particular we assume that $\partial \Omega$ is of class $\mathcal{C}^2$), $\alpha$ is a negative constant, $\nu$ is the outward normal at any point of $\partial \Omega$, and $f \in \text{Lip}_{\textrm{loc}}(\R_+)$. Moreover, here throughout the paper we assume that $0 \in \Omega$ since, by translation invariance, this actually cover the case where the solution has a generic single singular point.

This kind of elliptic problems go back to the celebrated paper of J. Serrin \cite{serrin}, where the author started to study elliptic equations under overdetermined boundary conditions. It is well known that if $u$ has a removable singularity in $0$, i.e. $u$ is a solution in the whole $\Omega$, then $\Omega$ is a ball and $u$ is radial around the origin ($u(x)=(R^2-|x|^2)/n$, where $R>0$ is the radius of the ball), see \cite{serrin} for more details. Serrin's result was also obtained by Weinberger in \cite{Weimb} in a simpler way, but Weinberger's approach is less flexible and works with a smaller class of problems. 

%
The technique that we use in this paper is a refinement of the moving plane method introduced by Serrin in the context of PDEs in \cite{serrin}. This technique goes back to the seminal paper of A. D. Alexandrov \cite{A} in his study of mean curvature surfaces. In particular, inspired by previous results in \cite{EFS,Dino}, we are able to treat the case of singular solutions in punctured domains also in the case of overdetermined problems. 

The literature on the study of monotonicity, symmetry and rigidity properties of classical solutions of elliptic problems via the moving planes method increased exponentially after the celebrated paper by B. Gidas, W. M. Ni and L. Nirenberg \cite{GNN}, where the authors considered semilinear Dirichlet problems in bounded domains. 

For singular problems much less is known. The first contribution in this direction, regarding solutions with finite energy, can be probably ascribed to S. Terracini in \cite{Ter}, who considered a problem with a singular potential at a point, posed in the whole space $\R^n$. The technique was then adapted in \cite{CLN2} to deal with the Dirichlet problems with a point-wise singularity. More recently, the cases of a smooth $(n-2)$-dimensional singular set, and of a singular set with zero capacity, have been considered in \cite{Dino} and \cite{EFS}, respectively, without energy assumptions on the solutions, namely considering non removable singularities. As far as overdetermined problems with singularities is concerned, we refer the reader to \cite{Aless}, where the authors considered problems of type
\[
\begin{cases}
\mathrm{div}(a(|\nabla u|) \nabla u) = 0 & \text{in }\Omega \setminus \{0\} \\
u=0 & \text{on $\partial \Omega$} \\
\partial_\nu u = \alpha & \text{on $\partial \Omega$};
\end{cases}
\]
and to \cite{AgoMag1, AgoMag2}, where the authors considered an overdetermined problem for the Green's function in the plane. In particular, up to now, there was no result for equations with a rather general locally Lipschitz right-hand side, or with $f \equiv 1$, which is the natural counterpart of Serrin's problem. In this paper, by combining the original Serrin's argument \cite{serrin} and the recent approach developed in \cite{EFS}, we provide such results in Theorem \ref{thm:main} and Corollary \ref{cor:class} below.

\begin{thm} \label{thm:main}
Let $\Omega \subset \R^n$ be a bounded domain of class $\mathcal{C}^2$ with $n \geq 2$. Let $u \in C^2(\overline{\Omega} \setminus \{0\})$ be a solution to \eqref{problem} with a non-removable singularity at $0$\footnote{Here we mean that the solution $u$ does not admit a smooth extension all over the domain $\Omega$. Namely it is not possible to find $\tilde{u} \in H^1(\Omega)$ with $u \equiv \tilde u$ in $\Omega$.}. Then, $\Omega$ is a ball centered at the origin, and $u$ is radially symmetric and decreasing.
\end{thm}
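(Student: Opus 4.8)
The plan is to run Serrin's moving plane method in an arbitrary direction, handling the isolated singularity by the zero--capacity technique of \cite{EFS}, and to pin down the critical hyperplanes by using the singular point itself. Fix the direction $e_1$, set $a=\sup_{\Omega}x_1$ (which is positive, since $0\in\Omega$), and for $\lambda<a$ let $T_\lambda=\{x_1=\lambda\}$, $\Omega_\lambda=\Omega\cap\{x_1>\lambda\}$, $x^\lambda=(2\lambda-x_1,x')$, $u_\lambda(x)=u(x^\lambda)$ and $w_\lambda=u_\lambda-u$. As long as the reflected cap $\Omega_\lambda^R=\{x^\lambda:x\in\Omega_\lambda\}$ is contained in $\overline\Omega$, the function $w_\lambda$ satisfies $-\Delta w_\lambda=c_\lambda(x)\,w_\lambda$ in $\Omega_\lambda$ away from the singular set, with $c_\lambda=(f(u_\lambda)-f(u))/(u_\lambda-u)$, and $w_\lambda\ge 0$ on the regular part of $\partial\Omega_\lambda$. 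The key point is that we only push the plane for $\lambda>0$: then $\overline{\Omega_\lambda}$ avoids the origin, so $u\in C^2(\overline{\Omega_\lambda})$ is regular there, while $u_\lambda$ may be singular at the single point $P_\lambda=(2\lambda,0,\dots,0)$ when $P_\lambda\in\Omega_\lambda$. Since a point has zero $H^1$--capacity for $n\ge2$, we test the differential inequality for $w_\lambda^-=\max\{-w_\lambda,0\}$ against $w_\lambda^-\varphi_\varepsilon^2$, with $\varphi_\varepsilon$ a standard cut--off vanishing in a neighbourhood of $P_\lambda$ and such that $\int|\nabla\varphi_\varepsilon|^2\to0$; since $w_\lambda^-$ is bounded on $\Omega_\lambda$ (note that $u$ is regular on $\overline{\Omega_\lambda}$, and that $w_\lambda^-$ vanishes near $P_\lambda$ when $u_\lambda\to+\infty$ there), the extra term $\int|\nabla\varphi_\varepsilon|^2(w_\lambda^-)^2$ is infinitesimal, and letting $\varepsilon\to0$ gives the estimate needed to apply the maximum principle in domains of small measure. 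Together with the control of $c_\lambda$ near $\partial\Omega$ coming from $|\nabla u|\to|\alpha|>0$ on $\partial\Omega$, this allows us to start the procedure for $\lambda$ close to $a$ (where no singularity is present and the scheme is the classical one) and to propagate the inequality $w_\lambda\ge0$ in $\Omega_\lambda$; we refer to \cite{EFS} for this machinery.

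Define $\Lambda=\inf\{\lambda>0:\ \Omega_\mu^R\subseteq\Omega\ \text{and}\ w_\mu\ge0\ \text{in}\ \Omega_\mu\ \text{for all}\ \mu\in(\lambda,a)\}$, and suppose $\Lambda>0$. By continuity, $\Omega_\Lambda^R\subseteq\overline\Omega$ and $w_\Lambda\ge0$ in $\Omega_\Lambda$, and the plane can be moved further unless either (i) $\Omega_\Lambda^R$ is internally tangent to $\partial\Omega$ at a point not on $T_\Lambda$, or (ii) $T_\Lambda$ is orthogonal to $\partial\Omega$ at some $Q\in T_\Lambda\cap\partial\Omega$. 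First, $w_\Lambda\equiv0$ is impossible, since it would make the singular set $\{0\}$ symmetric about $T_\Lambda$, forcing $\Lambda=0$; hence by the strong maximum principle $w_\Lambda>0$ in $\Omega_\Lambda$. In case (i) one has $w_\Lambda(P)=0$ at some $P\in\partial\Omega_\Lambda\setminus T_\Lambda$, and the overdetermined condition, used at $P$ and at its reflection together with the equality of the reflected normals at the tangency point, yields $\partial_\nu u_\Lambda(P)=\alpha=\partial_\nu u(P)$, so $\partial_\nu w_\Lambda(P)=0$, contradicting the Hopf lemma. In case (ii), using $Q^\Lambda=Q$, $u=0$ and $\partial_\nu u=\alpha$ on $\partial\Omega$, one checks that $w_\Lambda$ vanishes at $Q$ together with its gradient and its second derivative along the direction bisecting the corner, contradicting Serrin's corner lemma \cite{serrin}. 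Therefore $\Lambda=0$, i.e. $w_0=u_0-u\ge0$ in $\Omega\cap\{x_1>0\}$. Performing the same analysis in the direction $-e_1$ gives $u_0-u\ge0$ in $\Omega\cap\{x_1<0\}$ as well, and also $R_0(\Omega\cap\{x_1>0\})\subseteq\Omega$ and $R_0(\Omega\cap\{x_1<0\})\subseteq\Omega$; since $R_0$ is measure preserving this forces $R_0(\Omega)=\Omega$, and then, evaluating $u_0\ge u$ at $R_0(x)$, we get $u\equiv u_0$ on $\Omega$. Thus $\Omega$ is symmetric about $T_0=\{x_1=0\}$ and $u$ is even in $x_1$.

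For each $\lambda\in(0,a)$ we still have $w_\lambda\not\equiv0$ (otherwise $u$ would be symmetric about $T_\lambda$ with $\lambda\ne0$, contradicting that the singular set is the single point $0$), hence $w_\lambda>0$ in the interior of $\Omega_\lambda$ by the strong maximum principle, and the Hopf lemma on $T_\lambda$ gives $\partial_{x_1}u<0$ on $T_\lambda\cap\Omega$; letting $\lambda$ range over $(0,a)$ yields $\partial_{x_1}u<0$ in $\Omega\cap\{x_1>0\}$. Running this discussion for every direction shows that $\Omega$ is symmetric with respect to every hyperplane through the origin, hence is a ball centered at $0$, and that $u$ is symmetric and strictly monotone along every direction through $0$, hence radially symmetric and strictly decreasing. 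The main difficulty is the analytic one — making the weak and strong maximum principles and the small--measure argument work on caps over which $u_\lambda$ is singular — which is exactly where the zero--capacity cut--offs and the a priori information on $u$ near the origin from \cite{EFS} are needed; the geometric ``boundary'' part is essentially a reprise of Serrin's original argument \cite{serrin}.
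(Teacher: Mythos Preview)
Your proof is correct and follows essentially the same strategy as the paper: Serrin's moving plane argument combined with the zero--capacity cutoffs of \cite{EFS} to neutralize the singular point on the moving cap, then ruling out the tangency and orthogonality alternatives via Hopf's lemma and Serrin's corner lemma, and finally using the non--removability of the singularity to exclude $w_\Lambda\equiv 0$ at any noncentral position. The only differences are cosmetic---you slide the plane from the right with $\Omega_\lambda=\{x_1>\lambda\}$ while the paper slides from the left, and you fold the geometric stopping position $\bar\lambda$ and the comparison threshold $\lambda^*$ into a single quantity $\Lambda$---and a minor imprecision (the bound on $c_\lambda$ comes from $f\in\mathrm{Lip}_{\mathrm{loc}}$ and the regularity of $u$ on the cap, not from the Neumann datum).
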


As a consequence we deduce also the complete classification for the torsion problem:

\begin{cor}\label{cor:class}
	Let $\Omega \subset \R^n$ be a bounded domain of class $\mathcal{C}^2$ with $n \geq 2$. Let $u \in C^2(\overline{\Omega} \setminus \{0\})$ be a solution to \eqref{problem} with a non-removable singularity at $0$ and $f \equiv 1$. Then, $\Omega = B_R(0)$ for some $R>0$, and 
	\begin{equation*}
		u=u_R+u_F,
	\end{equation*}
	where $u_R(x)=(R^2-|x|^2)/n$ is the unique radial solution to
	\begin{equation*}
		\begin{cases}
			-\Delta u_R=1 & \text{in } \Omega\\
			u_R=0 & \text{on } \partial \Omega,
		\end{cases}
	\end{equation*}
	and $u_F$ is a fundamental solution given by
	\begin{equation*}
		u_F(x)=u_F(|x|) :=
		\begin{cases}
			 \displaystyle \hat C\left( \frac{1}{|x|^{n-2}} - \frac{1}{R^{n-2}} \right) & \text{if } n \geq 3,\\
			 \displaystyle -\hat C(\ln |x| - \ln R) & \text{if } n \geq 3,
		\end{cases}
	\end{equation*}
	for some positive constant $\hat{C}$ depending on the boundary data.
\end{cor}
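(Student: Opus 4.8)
The plan is to combine Theorem~\ref{thm:main} with an explicit integration of the radial ODE. Applying Theorem~\ref{thm:main} with $f \equiv 1$ (which is certainly in $\mathrm{Lip}_{\mathrm{loc}}(\R_+)$), we get that $\Omega = B_R(0)$ for some $R>0$ and that $u$ is radially symmetric and nonincreasing, so we may write $u = u(r)$ with $r = |x|$ and $u \in C^2((0,R])$. Substituting a radial function into $-\Delta u = 1$ reduces the PDE on $\Omega \setminus \{0\}$ to the linear second–order ODE
\begin{equation*}
-u''(r) - \frac{n-1}{r}\,u'(r) = 1, \qquad r \in (0,R).
\end{equation*}
A particular solution is the torsion profile $u_R(r) = (R^2 - r^2)/n$, while the radial solutions of the associated homogeneous equation (the radial harmonic functions on $(0,R)$) form a two–dimensional space spanned by the constants together with $\Gamma(r) := r^{2-n}$ if $n \ge 3$, respectively $\Gamma(r) := \ln r$ if $n = 2$. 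Hence every radial solution on $(0,R)$ has the form $u(r) = u_R(r) + a + b\,\Gamma(r)$ for some constants $a, b \in \R$.

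Next I would impose the boundary conditions on $\partial B_R$. Since $u_R(R) = 0$, the Dirichlet condition $u = 0$ on $\partial\Omega$ forces $a = -b\,\Gamma(R)$, so that $u = u_R + u_F$ with $u_F(r) := b\bigl(\Gamma(r) - \Gamma(R)\bigr)$; note that $u_F$ is harmonic in $\Omega\setminus\{0\}$ and vanishes on $\partial\Omega$, and that $u_R$ is the unique radial solution of the Dirichlet torsion problem on $B_R$ (uniqueness being classical, by the maximum principle). Since $\nu$ is the outer radial direction on $\partial B_R$, the overdetermined condition $\partial_\nu u = \alpha$ reads $u_R'(R) + b\,\Gamma'(R) = \alpha$, i.e. $-2R/n + b\,\Gamma'(R) = \alpha$, which expresses $b$ in terms of $\alpha$ and $R$ (this is the announced dependence on the boundary data).

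It remains to fix the sign of the constant. If $b = 0$ then $u \equiv u_R \in C^2(\overline\Omega) \subset H^1(\Omega)$, so the singularity would be removable, against the hypothesis; hence $b \neq 0$. The leading behaviour of $u$ near the origin is $u(r) \sim b\,\Gamma(r)$ as $r \to 0^+$: for $n\ge 3$ one has $\Gamma(r) \to +\infty$, and the positivity $u>0$ near $0$ forces $b>0$, so that, setting $\hat C := b > 0$, we obtain $u_F(x) = \hat C\bigl(|x|^{2-n} - R^{2-n}\bigr)$; for $n = 2$ one has $\Gamma(r)\to -\infty$, and $u>0$ near $0$ forces $b<0$, so that, setting $\hat C := -b > 0$, we obtain $u_F(x) = -\hat C(\ln|x| - \ln R)$. (Equivalently, the same sign can be read off from $u$ being nonincreasing, by inspecting the sign of $u'$ near $0$.) This is exactly the claimed decomposition $u = u_R + u_F$ with $\hat C>0$, which completes the proof.

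As for the difficulty: essentially all the geometric content is already contained in Theorem~\ref{thm:main}; once the domain is known to be a ball and the solution to be radial, the remainder is a one–variable computation, the only mildly delicate points being the identification of the radial harmonic kernel separately for $n=2$ and $n\ge 3$, and the sign analysis near the singularity that yields $\hat C > 0$ and simultaneously rules out a removable singularity.
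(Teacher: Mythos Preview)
Your proof is correct, and in fact more direct than the paper's. Once Theorem~\ref{thm:main} reduces the problem to a radial ODE on $(0,R]$, you simply write down the general solution $u = u_R + a + b\,\Gamma$, eliminate $a$ via the Dirichlet condition, and then read off both $b\neq 0$ and the sign of $b$ from the blow-up of $\Gamma$ at the origin together with the positivity of $u$ (the non-removability hypothesis ruling out $b=0$).

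The paper takes a slightly different route for the same endgame. Instead of integrating the ODE explicitly, it first proves a separate comparison result (Proposition~\ref{lem:comparison}) showing $u>u_R$ in $B_R\setminus\{0\}$, via a weak-maximum-principle argument with the same capacity cutoffs $\psi_{0,\varepsilon}$ used in the proof of Theorem~\ref{thm:main}; this yields $\partial_{-\eta}u>\partial_{-\eta}u_R$ on $\partial B_R$, which is what fixes the sign $\hat C>0$. The identification $u\equiv u_{\hat C}$ is then obtained by matching the Neumann data and invoking Cauchy uniqueness for the radial ODE from $r=R$. Your argument bypasses Proposition~\ref{lem:comparison} entirely and is shorter; the paper's version has the (minor) advantage of staying within the PDE/comparison framework developed earlier and of making the role of the overdetermined Neumann datum in selecting $\hat C$ more explicit.
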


\section{Notations and preliminary results} \label{notations}

The aim of this section, is to recall some notations typical of the moving plane technique. Since our problem \eqref{problem} is invariant up to isometries, we fix the direction $\eta=e_1$ and for any real number $\lambda$ we set
\begin{equation*}
\Omega_\lambda=\{x\in \Omega:x_1 <\lambda\}
\end{equation*}
\begin{equation*}
x_\lambda= R_\lambda(x)=(2\lambda-x_1,x_2,\ldots,x_n)
\end{equation*}
which is the reflection through the hyperplane $T_\lambda :=\{ x_1=
\lambda\}$. Moreover, we set 
\begin{eqnarray*}
&&\ \ a =\inf _{x\in\Omega}x_1\\
&& \Omega'_\lambda = R_\lambda (\Omega_\lambda)\\
&& \ 0_\lambda  =(2 \lambda, 0, \dots, 0)\\
&& \ \gamma_\lambda =\partial \Omega_\lambda'\cap \{x_1 > \lambda\}.
\end{eqnarray*}
Now, we observe as in \cite{serrin} that the following fact holds: 

\textit{Since $\Omega$ is of class $\mathcal{C}^2$, if $\lambda \in (a, a + \delta)$ (for $\delta> 0$ sufficiently small), then $\Omega_\lambda' \subset \Omega$. Moreover, the outward normal $\nu_\lambda'$ on $\gamma_\lambda$ is such that $\langle \nu_\lambda', e_1 \rangle > 0$, and $0 \not \in \overline{\Omega}_\lambda$.} 

These conditions remain true as $\lambda<0$ increases, until $\lambda$ reaches a value $\bar \lambda$ for which
one of the following situations occur:
\begin{itemize}
	\item[\textbf{(i)}] $\overline{\Omega}_{\bar \lambda}'$ becomes internally tangent to $\partial \Omega$ at some point $P \not \in T_{\bar \lambda}$, and $\bar \lambda < 0$;
	
	\item[\textbf{(ii)}] it happens that $\langle \nu_{\bar \lambda}', e_1 \rangle = 0$ at some point $Q$ of $\overline{\gamma_{\bar \lambda}} \cap T_{\bar \lambda}$ with $\bar \lambda < 0$. This fact means that $T_{\bar \lambda}$ is orthogonal to $\partial \Omega$ at $Q$;
	
	\item[\textbf{(iii)}] $\bar \lambda=0$.
\end{itemize}


Recalling that our problems is invariant up to rotations and reflections, we will exploit the fact that $u_\lambda(x) := u(x_\lambda)$ is a solution to
\begin{equation} \tag{$\mathcal{O}_\lambda$} \label{problemRefl}
	\begin{cases}
		-\Delta u_\lambda\,=f(u_\lambda)& \text{in}\quad R_\lambda(\Omega) \setminus \{0_\lambda\}  \\
		u_\lambda> 0 &  \text{in}\quad R_\lambda(\Omega)\setminus \{0_\lambda\}  \\
		u_\lambda=0 &  \text{on}\quad\partial R_\lambda(\Omega)\\
		\partial_{\nu_\lambda} u_\lambda = \alpha < 0 &  \text{on}\quad\partial R_\lambda(\Omega),
	\end{cases}
\end{equation}
where $\nu_\lambda$ is the outward normal at each point of $\partial R_\lambda (\Omega)$.
Notice in particular that 
\begin{equation}\label{eq:weaksol}
	\int_{\Omega} \langle \nabla u, \nabla \varphi \rangle \, \dx = \int_{\Omega} f(u) \cdot \varphi \, \dx, \qquad \forall \varphi \in C^1_c(\Omega \setminus \{0\}),
\end{equation}
and
\begin{equation}\label{eq:weaksolrefl}
	\int_{R_\lambda(\Omega)} \langle \nabla u_\lambda, \nabla \varphi \rangle \, \dx = \int_{R_\lambda(\Omega)} f(u_\lambda) \cdot \varphi \, \dx, \qquad \forall \varphi \in C^1_c(R_\lambda(\Omega) \setminus \{0_\lambda\}).
\end{equation}
Finally, for any  $a<\lambda<0$, we define
\begin{equation*}
w_\lambda\,:=\,u_\lambda-u,
\end{equation*}
which is well defined in $\Omega \cap R_\lambda(\Omega)$. In particular, for every $a<\lambda<\bar \lambda$, it is well defined in $\Omega_\lambda$ (the fact that $\Omega_\lambda' \subset \Omega$ is equivalent to the fact that $\Omega_\lambda \subset R_\lambda(\Omega)$)

Now, it is well known that
$\underset{\R^n}{\operatorname{Cap}_2}(\{0\})=0$, and $\underset{\R^n}{\operatorname{Cap}_2}(\{0_{\lambda}\})=0$. Using the approach in \cite{EFS}, thanks to the definition of $2$-capacity, for any open neighborhood $\mathcal{B}^{\lambda}_{\epsilon}$ of
$\{0,0_\lambda\}$, chosen in such a way that $\{\mathcal{B}^{\lambda}_{\epsilon}\}$ is increasing in $\varepsilon>0$, and the diameter of any connected component of $\mathcal{B}^{\lambda}_{\epsilon}$ is proportional to $\varepsilon$, we can construct a cutoff function $\psi_{\lambda,\varepsilon} \in C^{0,1}(\R^n, [0,1])$ such that
\begin{equation}\label{eq:cutoff}
	\begin{cases}
		\psi_{\lambda,\varepsilon} = 0 \ \ \ \  \text{in } \mathcal{B}_\delta^\lambda,\\
		\psi_{\lambda,\varepsilon} = 1 \ \ \ \  \text{in } (\Omega \cup \Omega'_\lambda) \setminus \mathcal{B}_\varepsilon^\lambda,\\
		\displaystyle \int_{\mathcal{B}^{\lambda}_{\epsilon}} |\nabla
		\psi_{\lambda,\varepsilon} |^2 dx < \varepsilon,
	\end{cases}
\end{equation}
for some neighborhood $\mathcal{B}^{\lambda}_{\delta} \subset \mathcal{B}^{\lambda}_{\varepsilon}$ of $\{0,0_\lambda\}$. Having in mind this construction, we recall two technical lemmas whose proof is contained in \cite{EFS}.

\begin{lem}[Lemma 3.1, \cite{EFS}]\label{lem:aiuto1} Let $\lambda \in (a,\bar \lambda)$ such that $0_\lambda \in \overline{\Omega}_{\lambda}$, and consider the function
	$$\varphi_{\lambda, \varepsilon}\,:= \begin{cases}
		\, w_\lambda^- \psi_{\lambda,\varepsilon}^2  & \text{in}\quad\Omega_\lambda, \\
		0 &  \text{in}\quad \R^n \setminus \Omega_\lambda,
	\end{cases}$$
	where $\psi_{\lambda,\varepsilon}$ is the cutoff function introduced in \eqref{eq:cutoff}. Then, $\varphi_{\lambda, \varepsilon} \in H^1_0(\Omega_\lambda)$, and a.e. on $ \Omega \cap R_{\lambda}(\Omega)$,
	\begin{equation}\label{gradvarphi2}
		\nabla \varphi_{\lambda, \varepsilon} = \psi_{\lambda,\varepsilon}^2  (\nabla w_\lambda \mathbbm{1}_{supp(w_\lambda^-) \cap supp(\varphi_{\lambda,\varepsilon})}) +
		2 (w_\lambda^- \mathbbm{1}_{supp(\varphi_{\lambda,\varepsilon})}) \psi_{\lambda,\varepsilon} \nabla  \psi_{\lambda,\varepsilon}.
	\end{equation}
\end{lem}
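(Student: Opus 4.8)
The statement to establish is Lemma~\ref{lem:aiuto1}, which is a purely technical fact about the test function $\varphi_{\lambda,\varepsilon}$. The plan is to verify two things separately: first, that $\varphi_{\lambda,\varepsilon}$ belongs to $H^1_0(\Omega_\lambda)$, and second, that its gradient has the claimed product form on $\Omega \cap R_\lambda(\Omega)$. Since $w_\lambda = u_\lambda - u$ is in $H^1_{\mathrm{loc}}(\Omega \cap R_\lambda(\Omega))$ (both $u$ and $u_\lambda$ are $C^2$ away from the singular points $0$ and $0_\lambda$, and the set $\mathcal{B}_\delta^\lambda$ isolates these points), and since $\psi_{\lambda,\varepsilon}$ is Lipschitz and compactly supported in a fixed neighbourhood where $\varphi_{\lambda,\varepsilon}$ is smooth enough, one is really just combining standard chain/product rules in $H^1$.

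First I would record that $w_\lambda^-$ is in $H^1$ on any open set compactly contained in $\Omega \cap R_\lambda(\Omega)$, using the fact that $v \mapsto v^-$ is Lipschitz together with Stampacchia's theorem, so that $\nabla w_\lambda^- = -\nabla w_\lambda \, \mathbbm{1}_{\{w_\lambda < 0\}}$ a.e.\ (equivalently $\nabla w_\lambda^- = \nabla w_\lambda \, \mathbbm{1}_{supp(w_\lambda^-)}$ up to the level set $\{w_\lambda = 0\}$, where both sides vanish a.e.). Next, since $\psi_{\lambda,\varepsilon} \equiv 0$ on the neighbourhood $\mathcal{B}_\delta^\lambda$ of $\{0, 0_\lambda\}$, the product $w_\lambda^- \psi_{\lambda,\varepsilon}^2$ extends by $0$ across the singular points and is therefore a genuine $H^1$ function on all of $\Omega_\lambda$, with no boundary-type contribution from $0$ or $0_\lambda$. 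The product rule for the product of the $H^1$ function $w_\lambda^-$ and the Lipschitz function $\psi_{\lambda,\varepsilon}^2$ (whose gradient is $2\psi_{\lambda,\varepsilon}\nabla\psi_{\lambda,\varepsilon}$) then gives exactly \eqref{gradvarphi2}, where the indicator functions simply keep track of the sets on which $w_\lambda^-$ and $\varphi_{\lambda,\varepsilon}$ (hence also $\psi_{\lambda,\varepsilon}$) are nonzero.

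To conclude $\varphi_{\lambda,\varepsilon} \in H^1_0(\Omega_\lambda)$ I would argue that it vanishes near $\partial \Omega_\lambda$ in the appropriate sense: on the flat part $T_\lambda \cap \partial\Omega_\lambda$ one has $w_\lambda = u_\lambda - u = 0$ since $u_\lambda(x) = u(x)$ for $x \in T_\lambda$, so $w_\lambda^- = 0$ there; on the curved part $\partial\Omega \cap \partial\Omega_\lambda$ one uses $u = 0$ and $u_\lambda \geq 0$, hence $w_\lambda \geq 0$ and again $w_\lambda^- = 0$. Together with the fact that $\varphi_{\lambda,\varepsilon}$ is bounded with bounded (in $L^2$) gradient—here the third property in \eqref{eq:cutoff}, $\int |\nabla\psi_{\lambda,\varepsilon}|^2 < \varepsilon$, controls the singular contribution—one gets $\varphi_{\lambda,\varepsilon} \in H^1_0(\Omega_\lambda)$ by approximation. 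The only mildly delicate point, and the one I expect to require care, is checking that the truncated function genuinely has no distributional mass concentrated at the isolated singular points $0$ and $0_\lambda$; this is precisely where the cutoff $\psi_{\lambda,\varepsilon}$ does its job, and it is the reason the capacity zero of $\{0,0_\lambda\}$ is invoked in the construction preceding the lemma. Since the full argument is carried out in \cite{EFS}, I would present the above as a sketch and refer to that paper for the remaining routine details.
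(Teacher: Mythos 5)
The paper does not actually give a proof of this lemma --- it states it and refers to \cite{EFS} (``we recall two technical lemmas whose proof is contained in \cite{EFS}''), so there is no internal argument to compare against. Your sketch is the correct, standard argument one expects to find in that reference: Stampacchia's theorem for $\nabla w_\lambda^-$ combined with the Lipschitz product rule for $w_\lambda^- \psi_{\lambda,\varepsilon}^2$, with the cutoff $\psi_{\lambda,\varepsilon}$ (vanishing on the neighbourhood $\mathcal{B}_\delta^\lambda$ of $\{0,0_\lambda\}$ and with controlled Dirichlet energy) neutralizing any potential distributional contribution at the singular points, and the boundary sign information ($w_\lambda = 0$ on $T_\lambda$ and $w_\lambda \geq 0$ on $\partial\Omega \cap \overline{\Omega_\lambda}$) placing $\varphi_{\lambda,\varepsilon}$ in $H^1_0(\Omega_\lambda)$. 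One small caution on a point you gloss over: the two expressions you describe as ``equivalent,'' namely $-\nabla w_\lambda\, \mathbbm{1}_{\{w_\lambda < 0\}}$ and $\nabla w_\lambda\, \mathbbm{1}_{supp(w_\lambda^-)}$, in fact differ by a sign on $\{w_\lambda < 0\}$; the statement \eqref{gradvarphi2} is itself loose on this sign (it writes $\nabla w_\lambda$ rather than $\nabla w_\lambda^-$ while the estimates in Step~2 treat $w_\lambda^-$ as nonnegative), so fix the convention for $w_\lambda^-$ explicitly before writing out the full proof.
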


\begin{lem}[Lemma 3.2, \cite{EFS}]\label{lem:aiuto2}
	Under the assumptions of Theorem \ref{thm:main}, let $\lambda \in (a,\bar \lambda)$. Then $ w_\lambda^-\in H^1_0(\Omega_\lambda)$ and
	\begin{equation*}
		\int_{\Omega_\lambda}|\nabla w_\lambda^-|^2\,dx\leq c(f,{\vert \Omega \vert}, \|u\|_{L^\infty(\Omega_\lambda)}),
	\end{equation*}
	{where $\vert \Omega \vert$ denotes the $n$-dimensional Lebesgue measure of $ \Omega$. }
\end{lem}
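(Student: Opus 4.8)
The plan is to prove Lemma \ref{lem:aiuto2} by testing the weak formulations \eqref{eq:weaksol} and \eqref{eq:weaksolrefl} against the cutoff-truncated function $\varphi_{\lambda,\varepsilon}$ introduced in Lemma \ref{lem:aiuto1}, and then passing to the limit as $\varepsilon \to 0$. First I would note that, since $w_\lambda = u_\lambda - u$ and both $u$ and $u_\lambda$ solve their respective equations on $\Omega_\lambda \subset R_\lambda(\Omega) \cap \Omega$ (away from the singular points $0$ and $0_\lambda$), subtracting \eqref{eq:weaksolrefl} from \eqref{eq:weaksol} with test function $\varphi_{\lambda,\varepsilon} \in C^{0,1}_c(\Omega_\lambda \setminus \{0,0_\lambda\})$ gives
\begin{equation*}
	\int_{\Omega_\lambda} \langle \nabla w_\lambda, \nabla \varphi_{\lambda,\varepsilon} \rangle \, \dx = \int_{\Omega_\lambda} \bigl( f(u_\lambda) - f(u) \bigr) \varphi_{\lambda,\varepsilon} \, \dx.
\end{equation*}
Using the expression \eqref{gradvarphi2} for $\nabla \varphi_{\lambda,\varepsilon}$, the left-hand side splits into a ``good'' term $\int_{\Omega_\lambda} \psi_{\lambda,\varepsilon}^2 |\nabla w_\lambda^-|^2 \, \dx$ (up to a sign, on the set where $w_\lambda < 0$, since $\nabla w_\lambda = -\nabla w_\lambda^-$ there) and a ``mixed'' term $2\int_{\Omega_\lambda} w_\lambda^- \psi_{\lambda,\varepsilon} \langle \nabla w_\lambda^-, \nabla \psi_{\lambda,\varepsilon}\rangle \, \dx$ that I would estimate by Cauchy--Schwarz and Young's inequality, absorbing half of the good term; the leftover involves $\int (w_\lambda^-)^2 |\nabla \psi_{\lambda,\varepsilon}|^2 \, \dx$, which is controlled by $\|w_\lambda^-\|_{L^\infty}^2 \int_{\mathcal{B}_\varepsilon^\lambda} |\nabla \psi_{\lambda,\varepsilon}|^2 \, \dx < \|w_\lambda^-\|_{L^\infty}^2 \, \varepsilon \to 0$.

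Next I would handle the right-hand side: on $\Omega_\lambda$ both $u$ and $u_\lambda$ are bounded (by $\|u\|_{L^\infty(\Omega_\lambda)}$, since $u_\lambda$ on $\Omega_\lambda$ is $u$ on $\Omega'_\lambda \subset \Omega$), and $f$ is locally Lipschitz, so $|f(u_\lambda) - f(u)| \le L |w_\lambda|$ with $L = L(f, \|u\|_{L^\infty})$. Since $\varphi_{\lambda,\varepsilon} = w_\lambda^- \psi_{\lambda,\varepsilon}^2 \le w_\lambda^-$ and is supported where $w_\lambda < 0$, the right-hand side is bounded in absolute value by $L \int_{\Omega_\lambda} (w_\lambda^-)^2 \, \dx \le L \, |\Omega| \, \|w_\lambda^-\|_{L^\infty}^2$. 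Combining, for every $\varepsilon$ small,
\begin{equation*}
	\frac{1}{2}\int_{\Omega_\lambda} \psi_{\lambda,\varepsilon}^2 |\nabla w_\lambda^-|^2 \, \dx \le C\bigl(f, |\Omega|, \|u\|_{L^\infty(\Omega_\lambda)}\bigr) + \|w_\lambda^-\|_{L^\infty}^2 \, \varepsilon.
\end{equation*}
Letting $\varepsilon \to 0$, since $\psi_{\lambda,\varepsilon} \to 1$ pointwise a.e.\ on $\Omega_\lambda$ (recall $\psi_{\lambda,\varepsilon} = 1$ outside $\mathcal{B}_\varepsilon^\lambda$ and $|\mathcal{B}_\varepsilon^\lambda| \to 0$), Fatou's lemma yields the asserted bound on $\int_{\Omega_\lambda} |\nabla w_\lambda^-|^2 \, \dx$. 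To conclude $w_\lambda^- \in H^1_0(\Omega_\lambda)$, I would observe that $w_\lambda^-$ vanishes on $T_\lambda$ (where $w_\lambda = 0$) and, since $\Omega_\lambda' \subset \Omega$, we have $u_\lambda = u$ on $\partial \Omega_\lambda \cap T_\lambda$ is not the only boundary piece — but on $\partial \Omega \cap \partial \Omega_\lambda$ one has $u = 0 \le u_\lambda$, so $w_\lambda \ge 0$ there and $w_\lambda^- = 0$; hence $\varphi_{\lambda,\varepsilon} \in H^1_0(\Omega_\lambda)$ by Lemma \ref{lem:aiuto1}, and $\varphi_{\lambda,\varepsilon} \to w_\lambda^-$ in $H^1(\Omega_\lambda)$ by the above uniform gradient bound together with dominated convergence, giving $w_\lambda^- \in H^1_0(\Omega_\lambda)$.

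I expect the main obstacle to be the careful justification near the singularities: one must check that $\varphi_{\lambda,\varepsilon}$ is a legitimate test function in \emph{both} \eqref{eq:weaksol} and \eqref{eq:weaksolrefl} (it must be compactly supported away from $0$ \emph{and} from $0_\lambda$, which is exactly why the cutoff $\psi_{\lambda,\varepsilon}$ is built to vanish near the pair $\{0, 0_\lambda\}$), and that the ``mixed'' term genuinely vanishes in the limit — this relies crucially on the third property in \eqref{eq:cutoff}, namely $\int |\nabla \psi_{\lambda,\varepsilon}|^2 \, \dx < \varepsilon$, which encodes the vanishing $2$-capacity of a point in $\R^n$ for $n \ge 2$ (for $n = 2$ this is the logarithmic cutoff). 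A secondary point requiring care is that $\|u\|_{L^\infty(\Omega_\lambda)} < \infty$: this holds because $\Omega_\lambda$ stays at positive distance from the singularity $0$ for $\lambda < \bar\lambda \le 0$ (indeed $0 \notin \overline{\Omega}_\lambda$ by the geometric discussion preceding the lemma), so $u$ is continuous up to $\overline{\Omega}_\lambda$; the same reasoning bounds $u_\lambda$ on $\Omega_\lambda$ since $\Omega_\lambda' \subset \Omega$ and $\Omega_\lambda'$ avoids $0$ as well.
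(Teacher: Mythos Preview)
Your proposal is correct and follows essentially the same approach as the one the paper invokes from \cite{EFS} (and which is reproduced almost verbatim in Step~2 of the proof of Theorem~\ref{thm:main}): test the difference of the weak formulations \eqref{eq:weaksol}--\eqref{eq:weaksolrefl} with $\varphi_{\lambda,\varepsilon}=w_\lambda^-\psi_{\lambda,\varepsilon}^2$, absorb the mixed term via Young's inequality and the capacity estimate in \eqref{eq:cutoff}, bound the right-hand side using the local Lipschitz property of $f$ together with $0\le u_\lambda\le u$ on $\mathrm{supp}(w_\lambda^-)$, and pass to the limit $\varepsilon\to 0$ by Fatou. Your closing argument that $\varphi_{\lambda,\varepsilon}\to w_\lambda^-$ in $H^1(\Omega_\lambda)$, and hence $w_\lambda^-\in H^1_0(\Omega_\lambda)$ by closedness, is exactly the intended conclusion.
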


Finally, we end this section recalling the celebrated corner's lemma of J. Serrin stated and proved in his famous paper \cite{serrin}.

\begin{lem}[Lemma 1, \cite{serrin}]\label{lem:corner}
	Let $D^*$ be a domain with $\mathcal{C}^2$ boundary and let $T$ be a hyperplane containing the normal to $\partial D^*$ at some point $Q$. Let $D$ then denote the portion of $D^*$ lying on some particular side of $T$.
	
	Suppose that $w$ is of class $C^2$ in the closure of $D$ and satisfies 
	\begin{equation*}
		\begin{cases}
			\Delta w \leq 0 &\text{ in } D\\
			w \ge 0 & \text{ in } D\\
			w(Q)=0.
		\end{cases}
	\end{equation*}
Let $\nu$ be any direction at $Q$ which enters $D$ non-tangentially. Then
	 $$\text{either }\quad \frac{\partial w}{\partial \mu}(Q) > 0, \quad  \text{ or } \quad \frac{\partial^2 w}{\partial \mu^2} (Q) > 0,$$
	unless $w\equiv 0$.
\end{lem}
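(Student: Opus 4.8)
\emph{Proof strategy.} The plan is to reproduce Serrin's original barrier argument at the corner point $Q$. First I would dispose of the trivial alternative: if $w\equiv0$ there is nothing to prove, so assume $w\not\equiv0$; since $-\Delta w\ge0$ and $w\ge0$ in the connected open set $D$, the strong maximum principle yields $w>0$ throughout the interior of $D$ (an interior zero would force $w\equiv0$). Then I would normalize the geometry: place $Q$ at the origin, take $e_n$ to be the inner normal to $\partial D^*$ at $Q$, and, since $T$ contains that normal, arrange $T=\{x_1=0\}$ with $D$ lying locally on the side $\{x_1>0\}$; because $\partial D^*\in\mathcal C^2$, near $Q$ it is the graph $x_n=\phi(x')$ with $\phi(0)=0$ and $\nabla\phi(0)=0$, so that a direction $\mu$ entering $D$ non-tangentially at $Q$ satisfies $\mu_1>0$ and $\mu_n>0$.

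The core step is the construction of a subsolution near $Q$. Using the uniform interior sphere condition for $\mathcal C^2$ domains, fix $R>0$ small enough that $B_R(Re_n)\subset D^*$, with this ball tangent to $\partial D^*$ only at $Q$; then $B_R(Re_n)\cap\{x_1>0\}\subset D$. For $\tau\in(0,R/2)$ put $\Sigma:=B_R(Re_n)\cap\{x_1>0\}\cap B_\tau(Q)$ and, for parameters $\alpha,c>0$ to be chosen,
\[
v(x):=c\,x_1\bigl(e^{-\alpha|x-Re_n|^2}-e^{-\alpha R^2}\bigr).
\]
Then $v\ge0$ in $\Sigma$, $v\equiv0$ on $\partial B_R(Re_n)\cup\{x_1=0\}$, $v(Q)=0$, $\nabla v(Q)=0$, and a short computation gives $D^2 v(Q)=2c\alpha Re^{-\alpha R^2}\bigl(e_1\otimes e_n+e_n\otimes e_1\bigr)$, whence $\partial^2_{\mu\mu}v(Q)=4c\alpha Re^{-\alpha R^2}\mu_1\mu_n>0$. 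Moreover
\[
\Delta v=2c\alpha\,x_1\,e^{-\alpha|x-Re_n|^2}\bigl(2\alpha|x-Re_n|^2-n-2\bigr),
\]
and since $|x-Re_n|>R-\tau>R/2$ on $\Sigma$, taking $\alpha$ large forces $\Delta v\ge0$ in $\Sigma$.

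It then remains to verify $v\le w$ on $\partial\Sigma$. As $\Sigma$ is the intersection of three open sets, $\partial\Sigma\subset\partial B_R(Re_n)\cup\{x_1=0\}\cup\partial B_\tau(Q)$; on the first two pieces $v=0\le w$, so everything reduces to the inequality on $\Gamma:=\partial B_\tau(Q)\cap\overline\Sigma$. There the positive ratio $w/\bigl(x_1(e^{-\alpha|x-Re_n|^2}-e^{-\alpha R^2})\bigr)$ is bounded below by a positive constant not depending on $c$: in the interior of $D$ because $w>0$ there; and near the parts of $\Gamma$ adjacent to $\partial B_R(Re_n)$ or to $\{x_1=0\}$ because, by Hopf's boundary lemma applied to the superharmonic $w$ in an interior ball of $D$, any zero of $w$ at such a point is non-degenerate, whereas $v/c$ vanishes only linearly in the distance to the corresponding face. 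Fixing $c$ below that constant gives $v\le w$ on $\Gamma$, hence on $\partial\Sigma$; the weak minimum principle applied to the superharmonic $w-v$ then yields $w\ge v$ in $\Sigma$. Finally, with $h:=w-v$ of class $C^2$ up to $Q$, $h\ge0$ and $h(Q)=0$, and the ray $t\mapsto Q+t\mu$ lying in $\Sigma$ for small $t>0$ (because $\mu_1,\mu_n>0$), a second-order expansion of $h$ along this ray gives $\partial_\mu w(Q)=\partial_\mu h(Q)\ge0$ and, in case of equality, $\partial^2_{\mu\mu}w(Q)\ge\partial^2_{\mu\mu}v(Q)>0$. Hence either $\partial_\mu w(Q)>0$ or $\partial^2_{\mu\mu}w(Q)>0$.

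I expect the genuine difficulty to lie precisely in this last verification: exhibiting a barrier that is simultaneously subharmonic on $\Sigma$, vanishing on the two faces issuing from $Q$, endowed with the prescribed positive second derivative at $Q$, and dominated by $w$ on the free part $\Gamma$ of $\partial\Sigma$ — the delicate point being the competition near the edges of $\Gamma$, where $w$ and $v$ both vanish and one must rule out that $w$ decays faster than linearly; this is where the strong maximum principle (positivity of $w$ inside $D$) and Hopf's boundary lemma come in. In the application to the moving plane method, where $w$ vanishes identically on the $T$-face, this step is substantially simpler.
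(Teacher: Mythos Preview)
The paper does not prove this lemma; it merely recalls it from Serrin \cite{serrin}, explicitly noting that it is ``stated and proved in his famous paper''. Your argument is Serrin's original barrier construction---the product $v=cx_1\bigl(e^{-\alpha|x-Re_n|^2}-e^{-\alpha R^2}\bigr)$ vanishing on both faces through $Q$, subharmonic near $Q$, with $\partial^2_{\mu\mu}v(Q)=4c\alpha Re^{-\alpha R^2}\mu_1\mu_n>0$---and your computations of $\Delta v$, of $D^2v(Q)$, and the final second-order Taylor comparison are all correct. The one step you rightly flag as delicate, namely the uniform inequality $v\le w$ on the free piece $\Gamma$ of $\partial\Sigma$, does require a bit more than a pointwise appeal to Hopf's lemma at individual boundary zeros on $T$: to make the bound uniform over the compact set $\overline{\Gamma}\cap\{x_1=0\}$ one should either run a compactness argument on $\partial_{x_1}w$ along that set, or compare $w$ with an auxiliary harmonic function vanishing on a $T$-neighborhood of it; with that routine detail filled in, your proof is complete and coincides with Serrin's.
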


\section{Proof of symmetry and  classification results}\label{mainproofsec}

The aim of this section is to prove Theorem \ref{thm:main} via a fine version of the moving plane method.

\begin{proof}[Proof of Theorem \ref{thm:main}]

First of all we recall that, since $\Omega$ is of class $\mathcal{C}^2$, then for $\lambda>a$ sufficiently close to $a$, we have that $\Omega_\lambda' \subset \Omega$ and that the outward normal $\nu_\lambda'$ to $\gamma_\lambda$ is such that $\langle \nu_\lambda', e_1 \rangle > 0$. Moreover, in this setting, $0 \not \in \Omega_\lambda$. Hence, we can define the set
\begin{equation}\label{def Lambda}
\Lambda:=\{a<\lambda< \bar \lambda : w_\tau> 0\,\,\,\text{in}\,\,\,\Omega_\tau\setminus
\{0_\tau\}\,\,\,\text{for all $\tau\in(a,\lambda]$}\}
\end{equation}
and to start with the moving plane procedure, we have to prove
that:

\textbf{Step 1:  $\Lambda \neq \emptyset$}. Fix a $ \bar \sigma > 0$ sufficiently small, such that
$0_{\lambda} \not \in \Omega$ for any $a< \lambda < a + \sigma$, with $\sigma \in (0, \bar \sigma)$. For any $ \lambda$ in this range we have that both $u$ and $u_\lambda$ are regular in $\Omega_\lambda$, and
\begin{equation}\nonumber
\begin{cases}
-\Delta w_\lambda + c_\lambda(x) w_\lambda = 0 & \text{in } \Omega_\lambda 	\\
w_\lambda \geq 0 & \text{on } \partial \Omega_\lambda,
\end{cases}
\end{equation}
where 
\begin{equation}\nonumber
	c_\lambda(x):=\begin{cases} \displaystyle 
		-\frac{f(u_\lambda(x))-f(u(x))}{u_\lambda(x)-u(x)} & \text{if } u(x) \neq u_\lambda(x) 	\\
		0 &  \text{if } u(x) = u_\lambda(x),
	\end{cases}
\end{equation}
and $c_\lambda \in L^\infty(\Omega_\lambda)$ as $f \in \textrm{Lip}_{\textrm{loc}}(\R)$. Actually, we can say even more: there exists $C(\bar \sigma)>0$ such that
$$\|c_\lambda\|_{L^\infty(\Omega_\lambda)} \leq C, \ \text{for any } \lambda \in (a, a+\bar \sigma).$$
Hence, up to fix $\bar \sigma$ smaller, we are able to apply the weak comparison principle in small domains in order to deduce that
$$w_\lambda \geq 0 \text{ in } \Omega_\lambda$$
for any $\lambda \in (a, a+\bar \sigma)$. We point out that the same monotonicity result can be achieved applying the Hopf boundary lemma to $w_\lambda$ in a tubular neighborhood $\mathcal{I}$ of $\partial \Omega$ intersected with $\Omega_\lambda$. Moreover, since $w_\lambda>0$ in $\partial \Omega_\lambda \cap \{x_1<\lambda\}$, by the strong maximum principle we deduce that
$$w_\lambda > 0 \text{ in } \Omega_\lambda$$
for any $\lambda \in (a, a+\bar \sigma)$. The last one immediately implies that $\Lambda \neq \emptyset$.
%

%
%
%

%
Since it is not empty, by its own definition \eqref{def Lambda} it follows that $\Lambda$ is an interval, and we can define
$$\lambda^*:= \sup \Lambda \in (a, \bar \lambda].$$

\textbf{Step 2:} we claim that $\lambda^* =\bar \lambda$. 

Let us assume by contradiction that $\lambda^* < \bar \lambda$. This fact obviously implies that $\lambda^*<0$. We claim that there exists $\bar \sigma > 0$ such that $u \leq u_{\lambda^*+\sigma}$ in $\Omega_{\lambda^* + \sigma} \setminus \{0_{\lambda^*+\nu}\}$ for any $0<\sigma<\bar \sigma$. By continuity we know that $w_{\lambda^*} \geq 0$ in $\Omega_{\lambda^*} \setminus \{0_{\lambda^*}\}$; by the strong maximum principle we immediately get that $w_{\lambda^*}>0$ in $\Omega_{\lambda^*} \setminus \{0_{\lambda^*}\}$. Let us  consider any compact set $\mathcal{K} \subset 
\Omega_{\lambda^*} \setminus \{0_{\lambda^*}\}$, to be properly chosen later; thanks to the uniform continuity, we can find $\bar \sigma= \bar \sigma(\mathcal{K}, \lambda^*)>0$ sufficiently small such that $\mathcal{K} \subset \Omega_{\lambda^*+ \sigma} \setminus \{0_{\lambda^*+ \sigma}\}$, and $w_{\lambda^*+\sigma}>0$ in $\mathcal{K}$ for any $0<\sigma<\bar \sigma$. 

Let us consider the cutoff function given by Lemma \ref{lem:aiuto1},
$$\varphi_{\lambda^*+\sigma, \varepsilon}\,:= \begin{cases}
	\, w_{\lambda^*+\sigma}^- \psi_{\lambda^*+\sigma,\varepsilon}^2  & \text{in}\quad\Omega_{\lambda^*+\sigma}, \\
	0 &  \text{in}\quad \R^n \setminus \Omega_{\lambda^*+\sigma}.
\end{cases}$$
We can plug $\varphi_{\lambda^*+\sigma, \varepsilon}$ as test function in \eqref{eq:weaksol} and \eqref{eq:weaksolrefl} so that, by subtracting, we get
\begin{equation*}\label{eq:diff1}
	\int_{\Omega_{\lambda^*+\sigma}} \langle \nabla w_{\lambda^*+\sigma}, \nabla \varphi_{\lambda^*+\sigma, \varepsilon} \rangle \, \dx = \int_{\Omega_{\lambda^*+\sigma}} [f(u_{\lambda^*+\sigma})-f(u)] \cdot \varphi_{\lambda^*+\sigma, \varepsilon} \, \dx.
\end{equation*}
By \eqref{gradvarphi2}, we obtain
\begin{equation*}\label{eq:diff2}
	\begin{split}
\int_{\Omega_{\lambda^*+\sigma}} |\nabla w_{\lambda^*+\sigma}^-|^2 \psi_{\lambda^*+\sigma,\varepsilon}^2 \, \dx &\leq  2 \int_{\Omega_{\lambda^*+\sigma}}  w_{\lambda^*+\sigma}^- \psi_{\lambda^*+\sigma,\varepsilon} |\nabla w_{\lambda^*+\sigma}^-| |\nabla \psi_{\lambda^*+\sigma,\varepsilon}|  \, \dx \\
& \qquad + \int_{\Omega_{\lambda^*+\sigma}} |f(u_{\lambda^*+\sigma})-f(u)| w_{\lambda^*+\sigma}^-\psi_{\lambda^*+\sigma,\varepsilon}^2  \, \dx.
\end{split}
\end{equation*}
By using on the right hand side the weighted Young's inequality, we deduce that
\begin{multline*}
	\int_{\Omega_{\lambda^*+\sigma}} |\nabla w_{\lambda^*+\sigma}^-|^2 \psi_{\lambda^*+\sigma,\varepsilon}^2 \, \dx \\
	 \leq 8 \int_{\Omega_{\lambda^*+\sigma}} |\nabla \psi_{\lambda^*+\sigma,\varepsilon}|^2  (w_{\lambda^*+\sigma}^-)^2\, \dx + 2\mathcal{L}_f \int_{\Omega_{\lambda^*+\sigma}} (w_{\lambda^*+\sigma}^-)^2 \psi_{\lambda^*+\sigma,\varepsilon}^2  \, \dx,
\end{multline*}
where $\mathcal{L}_f$ is the Lipschitz constant of $f$. 

Now, we recall that $w_{\lambda^*+\sigma}>0$ in $\mathcal{K}$ (which means $w_{\lambda^*+\sigma}^-=0$ in $\mathcal{K}$), hence the last inequality becomes
\begin{multline*}
		\int_{\Omega_{\lambda^*+\sigma} \setminus \mathcal{K}} |\nabla w_{\lambda^*+\sigma}^-|^2 \psi_{\lambda^*+\sigma,\varepsilon}^2 \, \dx \\ \leq 8 \int_{\Omega_{\lambda^*+\sigma} \setminus \mathcal{K}} |\nabla \psi_{\lambda^*+\sigma,\varepsilon}|^2  (w_{\lambda^*+\sigma}^-)^2\, \dx + 2\mathcal{L}_f \int_{\Omega_{\lambda^*+\sigma} \setminus \mathcal{K}} (w_{\lambda^*+\sigma}^-)^2 \psi_{\lambda^*+\sigma,\varepsilon}^2  \, \dx.
\end{multline*}
By definition of $\psi_\varepsilon$, see \eqref{eq:cutoff}, and the fact that $0 \leq u_{\lambda^*+\nu} \leq u$ in $\Omega_{\lambda^*+\sigma} \cap \text{supp}(w_{\lambda^*+\sigma}^-)$, this gives
\begin{equation*}\label{eq:diff5}
	\int_{\Omega_{\lambda^*+\sigma} \setminus \mathcal{K}} |\nabla w_{\lambda^*+\sigma}^-|^2 \psi_{\lambda^*+\sigma,\varepsilon}^2 \, \dx \leq  8 \|u\|_{L^\infty(\Omega_{\lambda^*+\sigma})}^2  \varepsilon + 2\mathcal{L}_f \int_{\Omega_{\lambda^*+\sigma} \setminus \mathcal{K}} (w_{\lambda^*+\sigma}^-)^2 \psi_{\lambda^*+\sigma,\varepsilon}^2  \, \dx\,;
\end{equation*}
Hence, thanks to Fatou's lemma, we can pass to the limit as $\varepsilon \rightarrow 0^+$ in order to deduce that
\begin{equation*}\label{eq:diff6}
	\begin{split}
		\int_{\Omega_{\lambda^*+\sigma} \setminus \mathcal{K}} |\nabla w_{\lambda^*+\sigma}^-|^2  \, \dx \leq 2\mathcal{L}_f \int_{\Omega_{\lambda^*+\sigma} \setminus \mathcal{K}} (w_{\lambda^*+\sigma}^-)^2  \, \dx.
	\end{split}
\end{equation*}
Finally, the Poincar\'e's inequality on the right hand side gives
\begin{equation*}\label{eq:diff7}
	\begin{split}
		\int_{\Omega_{\lambda^*+\sigma} \setminus \mathcal{K}} |\nabla w_{\lambda^*+\sigma}^-|^2  \, \dx \leq 2\mathcal{L}_f C_p^2(|\Omega_{\lambda^*+\sigma} \setminus \mathcal{K}|) \int_{\Omega_{\lambda^*+\sigma} \setminus \mathcal{K}} |\nabla w_{\lambda^*+\sigma}^-|^2  \, \dx,
	\end{split}
\end{equation*}
where $C_p(\cdot)$ is the Poincar\'e's constant. Now, we can fix $\mathcal K \subset 
\Omega_{\lambda^*} \setminus \{0_{\lambda^*}\}$ in such a way that it occurs
$$2\mathcal{L}_f C_p^2(|\Omega_{\lambda^*+\sigma} \setminus \mathcal{K}|)<1$$
for every $\sigma \in (0, \bar \sigma)$ sufficiently small (it is sufficient to take $|\Omega_{\lambda^*} \setminus \mathcal{K}|$ sufficiently small). Hence, for any such $\sigma$ we get
$$\int_{\Omega_{\lambda^*+\sigma} \setminus \mathcal{K}} |\nabla w_{\lambda^*+\sigma}^-|^2  \, \dx \leq 0,$$
namely $w_{\lambda^*+\sigma} \geq 0$, in contradiction with the definition of $\lambda^*$. Hence, it is necessary that $\lambda^*=\bar \lambda$.

\textbf{Step 3:} we prove that  $\bar \lambda=0$. 

In Step 2 we proved that 
$$w_\lambda > 0 \; \text{in } \Omega_\lambda \setminus \{0_\lambda\} \quad \text{for any $\lambda \in (a, \bar \lambda)$}.$$
Arguing by contradiction, assume that $\bar \lambda < 0$. Notice at fist that $w_{\bar \lambda} \ge 0$ in $\Omega_{\bar \lambda} \setminus \{0_\lambda\}$. If $w_{\bar \lambda} \equiv 0$, then, taking into account the fact that $u>0$ in $\Omega$, we have that $\Omega$ is symmetric about $T_{\bar \lambda}$. However, in this case $0_{\bar \lambda} \in \Omega_{\bar \lambda}$, and this is not possible, since $w_{\bar \lambda} \equiv 0$ implies that $u$ must be singular on both $0$ and $0_{\bar \lambda}$ (note that $0 \neq 0_{\bar \lambda}$, since $\bar \lambda<0$). Therefore, assuming $\bar \lambda <0$ we have that $w_{\bar \lambda} \not \equiv 0$ in $\Omega_{\bar \lambda} \setminus \{0_{\bar \lambda}\}$, and either $\textbf{(i)}$ or $\textbf{(ii)}$ occurs in the definition of $\bar \lambda$.

First of all we focus our attention on $\textbf{(i)}$. In this case, if $0_\lambda \not \in \overline{\Omega_{\bar \lambda}}$, then one can conclude as in the paper by Serrin \cite{serrin}; namely, since $w_{\bar \lambda} \not \equiv 0$, thanks to the strong maximum principle we deduce that $w_{\bar \lambda} >0$ in $\Omega_{\bar \lambda}$ (note that locally symmetric regions cannot occur since $u>0$ in $\Omega$, $u=0$ on $\partial \Omega$, and $\Omega$ is connected). Thus, on one side by the Hopf boundary lemma we have that the outer normal $\partial_\nu w_{\bar \lambda}(P_{\bar \lambda})<0$; but on the other side, the overdetermined Neumann condition gives $\partial_\nu w_{\bar \lambda}(P_{\bar \lambda})=0$, a contradiction. 

If $0_{\bar \lambda} \in \overline{\Omega_{\bar \lambda}}$, we are still able to deduce that $w_{\bar \lambda} > 0$ in $\Omega_{\bar \lambda}  \setminus \{0_{\bar \lambda}\}$. To prove this fact, we recall that $w_{\bar \lambda}$ is nonnegative and nontrivial in the open connected set $\Omega_{\bar \lambda}  \setminus \{0_\lambda\}$; thanks to this and to the strong maximum principle, we deduce again that $w_{\bar \lambda} > 0$ in $\Omega_{\bar \lambda}  \setminus \{0_\lambda\}$. Since moreover $0_{\bar \lambda} \neq P_{\bar \lambda}$ (otherwise $P =0$, with $P \in \partial \Omega$, which is not the case), we obtain the same contradiction as before, by using the Hopf boundary lemma in $P_\lambda$. Therefore, case $\textbf{(i)}$ in the definition of $\bar \lambda$ cannot take place.

Now, let us analyze $\textbf{(ii)}$. At the point $Q$ we cannot apply the Hopf boundary lemma on $w_{\bar \lambda}$, since in that point the direction $e_1$ is orthogonal to $\partial \Omega$. Moreover, we note that 
$$w_{\bar \lambda} (Q) = u_{\bar \lambda}(Q)-u(Q)=0,$$
and that there exist a neighborhood of $Q$ which does not contain $0_{\bar \lambda}$ or $0$. Thus, in this case the contradiction follows exactly as in Serrin's paper \cite{serrin} (pages 307-308). Essentially, on one side it can be proved that $w_{\bar \lambda}$ has a zero of second order at $Q$. Then, thanks to Lemma \ref{lem:corner} we reach a contradiction since at $Q$ at least one first or second directional derivative must be negative. Hence also case \textbf{(ii)} in the definition of $\bar \lambda$ cannot take place.

In conclusion, we reached a contradiction with the fact that $\bar \lambda<0$. 
That is, the only possibility is that we are in case $\textbf{(iii)}$, of the definition of $\bar \lambda$: $\bar \lambda=0$, $0_{\bar \lambda}=0$, and $w_{\bar \lambda} \ge 0$ in $\Omega_{\bar \lambda}$.

\textbf{Step 4: conclusion}. Since the moving plane procedure can be
performed in the same way but in the opposite direction, then $u(x_1,x')=u(-x_1,x')$, and the domain $\Omega$ is symmetric about $T_0$. Moreover, the solution is
increasing in the $x_1$-direction in $\{x_1<0\}$, and by the strong maximum principle for the linearized equation we also have that $\partial_{x_1} u > 0$ in $\{x_1<0\}$. Now, we can repeat verbatim this argument replacing $e_1$ with any direction $\nu \in \mathbb{S}^{n-1}$, and hence we obtain the thesis of Theorem \ref{thm:main}.
\end{proof}

Before proving the classification result, we need the following comparison argument.

\begin{prop}\label{lem:comparison} 
	Let $u \in C^2(\overline{B_R(0)} \setminus \{0\})$ be a solution to \eqref{problem} with $f \equiv 1$.
	Then 
	$$u > u_R \text{ in } B_R(0) \setminus \{0\},$$
	where $u_R$ is the same of Corollary \ref{cor:class}.
\end{prop}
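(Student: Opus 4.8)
The plan is to compare $u$ with the radial torsion solution $u_R$ on $B_R(0)$ by exploiting that $w := u - u_R$ is harmonic on the punctured ball, vanishes on the boundary, and — crucially — has the \emph{same} normal derivative on $\partial B_R(0)$. First I would note that since $f \equiv 1$, both $-\Delta u = 1$ and $-\Delta u_R = 1$ hold in $B_R(0) \setminus \{0\}$, so $w$ is harmonic there; moreover $w = 0$ on $\partial B_R(0)$ and $\partial_\nu w = \alpha - \partial_\nu u_R = \alpha - (-\tfrac{2R}{n}) = 0$ on $\partial B_R(0)$, because the overdetermined Neumann datum of $u$ matches that of $u_R$ (note $\partial_\nu u_R = -2R/n = \alpha$ is forced by the fact that $u_R$ solves the same overdetermined problem on the ball, or equivalently this identifies the constant). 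Thus $w$ is a harmonic function on the punctured ball with vanishing Cauchy data on $\partial B_R(0)$. By the Cauchy–Kovalevskaya / unique continuation property for harmonic functions (a harmonic function whose value and normal derivative both vanish on a relatively open piece of a $C^2$ boundary cannot be nontrivial unless the singularity forces it), $w$ must be nonconstant and in fact singular at $0$; the task is to pin down its sign.

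Next I would analyze the singularity. Since $w$ is harmonic in $B_R(0) \setminus \{0\}$ and bounded away from $0$, the Bôcher-type classification of isolated singularities of harmonic functions gives that near the origin
\begin{equation*}
w(x) = c\, \Gamma(x) + h(x),
\end{equation*}
where $\Gamma$ is the fundamental solution of $-\Delta$ (so $\Gamma(x) = |x|^{2-n}$ up to a positive constant for $n \geq 3$, and $\Gamma(x) = -\ln|x|$ for $n = 2$), $h$ is harmonic across $0$, and $c \in \R$. If $c = 0$ then $w$ extends harmonically to all of $B_R(0)$ with zero Cauchy data on $\partial B_R(0)$, hence $w \equiv 0$ by Holmgren/unique continuation, which would make $u = u_R$ a smooth (removable-singularity) solution — contradicting the non-removability hypothesis. [One should check that the non-removability of the singularity of $u$ is genuinely being used here; if the Proposition is meant to hold also in the removable case, then the strict inequality degenerates to $u \geq u_R$ with equality iff $u = u_R$, and the statement should be read accordingly.] If $c < 0$, then $w \to -\infty$ near $0$, and I would then run a comparison/maximum-principle argument on $B_R(0) \setminus \overline{B_\rho(0)}$: on the outer boundary $w = 0$ and $\partial_\nu w = 0$, and Hopf's lemma applied to the harmonic function $w$ at $\partial B_R(0)$ forces a sign contradiction unless $w \equiv 0$ — more precisely, if $w$ were $\leq 0$ somewhere interior with $c < 0$, by the maximum principle $w < 0$ throughout, but then $w$ attains its maximum $0$ on $\partial B_R(0)$ and Hopf gives $\partial_\nu w < 0$, contradicting $\partial_\nu w = 0$. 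Hence $c > 0$, so $w \to +\infty$ at $0$, and the strong maximum principle on $B_R(0) \setminus \{0\}$ (with $w \geq 0$ near $\partial B_R(0)$ by Hopf, $w = 0$ on $\partial B_R(0)$, $w \to +\infty$ at $0$) yields $w > 0$ in $B_R(0) \setminus \{0\}$, which is the claim.

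The main obstacle I anticipate is the sign-determination of the singular coefficient $c$: ruling out $c \leq 0$ cleanly. The natural tool is a combination of Hopf's boundary lemma at $\partial B_R(0)$ (using $\partial_\nu w = 0$ there to exclude $w$ from being negative and nontrivial) together with the strong maximum principle away from the origin; the delicate point is handling the case $c = 0$, where one genuinely needs unique continuation from the Cauchy data on $\partial B_R(0)$ to conclude $w \equiv 0$ and then invoke the non-removability hypothesis. An alternative, perhaps cleaner, route avoiding Bôcher's theorem: test the weak formulations of $u$ and $u_R$ against a suitable cutoff as in Lemma \ref{lem:aiuto1}–\ref{lem:aiuto2}, showing $\int |\nabla w^-|^2 = 0$ directly (the boundary terms vanish because $w = 0$ and $\partial_\nu w = 0$ on $\partial B_R(0)$, and the capacity argument kills the contribution near $0$), giving $w \geq 0$, then $w > 0$ by the strong maximum principle and $w \not\equiv 0$ by non-removability. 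I would likely present this energy-based argument, as it is in the spirit of the rest of the paper and sidesteps the pointwise singularity analysis.
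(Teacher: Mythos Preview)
Your primary argument rests on a false claim: $\partial_\nu w = 0$ on $\partial B_R(0)$ does \emph{not} hold. The Neumann datum $\alpha$ of $u$ is not equal to $\partial_\nu u_R = -2R/n$ when the singularity is non-removable; indeed, Corollary~\ref{cor:class} itself exhibits $u = u_R + u_F$ with $u_F$ contributing a strictly negative normal derivative on $\partial B_R$, so $\alpha < -2R/n$. This breaks both your unique-continuation argument for $c=0$ and your Hopf argument for $c<0$. The B\^ocher route can be repaired, but with different ingredients: rule out $c=0$ by Dirichlet uniqueness alone ($w$ harmonic in all of $B_R$ with $w=0$ on $\partial B_R$ forces $w\equiv 0$, contradicting non-removability --- no Cauchy data needed), and rule out $c<0$ using the positivity of $u$ (if $c<0$ then $u = w + u_R \to -\infty$ at the origin, contradicting $u>0$).

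Your alternative energy approach at the end is essentially the paper's proof, and it is correct --- but again not for the reason you state. The boundary terms vanish simply because the test function $(u_R-u)^+ \psi_{0,\varepsilon}^2 = w^- \psi_{0,\varepsilon}^2$ is zero on $\partial B_R$ (since $u=u_R=0$ there), so it lies in $H^1_0$; the overdetermined Neumann condition plays no role whatsoever. Subtracting the weak formulations and using $f\equiv 1$ (so the right-hand side cancels exactly, $\mathcal{L}_f=0$), one gets $\int |\nabla w^-|^2 \psi_{0,\varepsilon}^2 \le C\int (w^-)^2 |\nabla \psi_{0,\varepsilon}|^2$; since $0<u<u_R$ on the support of $w^-$, the factor $(w^-)^2$ is bounded, the capacity estimate sends the right side to $0$, hence $w\ge 0$, and then $w>0$ by the strong maximum principle together with non-removability. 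The point to retain is that Proposition~\ref{lem:comparison} does not use the Neumann datum at all.
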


\begin{proof}
	To prove this result it is sufficient to take $\varphi=(u_R-u)^+ \cdot \psi_{0,\varepsilon}^2$ as test function in the weak formulation of $u$ and $u_R$, where $\psi_{0,\varepsilon}$ is defined in \eqref{eq:cutoff}. By subtracting both the weak formulations, arguing similarly as in the Step 2 of Theorem \ref{thm:main}, and using the fact that $\Delta U = \Delta u_R$ in $\Omega \setminus \{0\}$, we get that $u \geq u_R \text{ in } B_R(0) \setminus \{0\}$. Finally, using the strong maximum principle we get the thesis.
\end{proof}

Now, we are ready to prove Corollary \ref{cor:class}.

\begin{proof}[Proof of Corollary \ref{cor:class}]
	By Theorem \ref{thm:main} we have that if $u$ is a solution to \eqref{problem} in some smooth domain $\Omega$, then $u(x)=u(|x|)$ and $\Omega = B_R(0)$.
	Let 
	$$u_C = u_R+u_F^C,$$
	where we recall that
		where $u_R=(R^2-|x|^2)/n$ is the unique radial solution to
	\begin{equation*}
		u_R(x)=\frac{R^2-|x|^2}{n} \quad \text{and} \quad 		u_F^C(x)=
		\begin{cases}
			\displaystyle  C\left( \frac{1}{|x|^{n-2}} - \frac{1}{R^{n-2}} \right) & \text{if } n \geq 3,\\
			\displaystyle -C(\ln |x| - \ln R) & \text{if } n \geq 3,
		\end{cases}
	\end{equation*}
	for any $C \in \R$. We observe that $u_C$ solves the following problem 
	\begin{equation*}
		\begin{cases}
			-\Delta u_C=1 & \text{in } \Omega \setminus \{0\}\\
			 u_C=0 & \text{on } \partial \Omega.
		\end{cases}
	\end{equation*}
	Thanks to Proposition \ref{lem:comparison}, we observe that any solution $u$ to \eqref{problem} with $f\equiv 1$ satisfies
	$$u > u_R \text{ in } B_R(0) \setminus \{0\}.$$
	Consequently,
	$$\partial_{-\eta} u > \partial_{-\eta} u_R \text{ on } \partial B_R(0),$$
	where $-\eta$ is the inward pointing normal to $\partial B_R(0)$. Therefore there exists $\hat C \in \R$, such that
	$$\partial_\eta u = \partial_\eta u_{\hat C} \quad \text{on } \partial \Omega.$$
	Now, we deduce that 
	$$u \equiv u_{\hat C}.$$
	This fact immediately follows by Cauchy's Theorem for ODEs. In fact, we can see that both $u$ and $u_{\hat C}$ solve the following Cauchy problem
	\begin{equation*}
		\begin{cases}
			-(u'r^{n-1})'=r^{n-1} & \text{in } (0, R]\\
			u(R)=0 \\
			u'(R)=-u_{\eta}(R) = -\alpha.
		\end{cases}
	\end{equation*}
	Hence, $u \equiv u_{\hat C}$ in $(0,R]$.
\end{proof}

\bigskip


\begin{thebibliography}{99}


\bibitem{AgoMag1} 
{\sc V.~Agostiniani and R.~Magnanini}.
\newblock Stability in an overdetermined problem for the Green's function.
\newblock {\em Ann. Mat. Pura Appl. (4)} 190, No. 1, 2011, pp. 21--31.

\bibitem{AgoMag2} 
{\sc V.~Agostiniani and R.~Magnanini}.
\newblock Symmetries in an overdetermined problem for the Green's function. \newblock {\em Discrete Contin. Dyn. Syst., Ser. S 4}, No. 4, 2011, pp. 791--800.

\bibitem{Aless} {\sc G. Alessandrini and E. Rosset}. 
\newblock Symmetry of singular solutions of degenerate quasilinear elliptic equations.
\newblock {\em Rend. Istit. Mat. Univ. Trieste}, vol. 39, 2007, pp. 1--8.

\bibitem{A} {\sc A.~D.~Alexandrov}.
\newblock A characteristic property of the spheres.
\newblock { \em Ann. Mat. Pura Appl.} 58, 1962, pp.  303 -- 354.

%

\bibitem{CLN2} {\sc L.~Caffarelli, Y.~Y.~Li, L.~Nirenberg}.
\newblock  Some remarks on singular solutions of nonlinear elliptic equations. II: Symmetry and monotonicity via moving planes.
\newblock \emph{Advances in geometric analysis}, 97--105, Adv. Lect. Math. (ALM), 21, Int. Press, Somerville, MA, 2012.
%

\bibitem{EFS} {\sc F.~Esposito, A.~Farina and B.~Sciunzi}.
\newblock Qualitative properties of singular solutions to semilinear elliptic problems. 
\newblock {\em J. Differ. Equ.} 265, (2018), 1962--1983.




\bibitem{GNN}{\sc B.~Gidas, W.~M.~Ni, L.~Nirenberg}.
\newblock Symmetry and related properties via the maximum principle.
\newblock  {\em Comm. Math. Phys.}, 68, 1979, pp. 209--243.



\bibitem{Dino} {\sc B. Sciunzi}.
\newblock On the moving plane method for singular solutions to semilinear elliptic equations.
\newblock {\em J. Math. Pures Appl.} (9), 108 (2017), no. 1, 111--123.


\bibitem{serrin} {\sc J. Serrin}.
\newblock A symmetry problem in potential theory.
\newblock {\em Arch. Rational Mech. Anal.}, 43, 1971, pp. 304--318.


\bibitem{Ter} {\sc S. Terracini}.
\newblock On positive entire solutions to a class of equations with a singular coefficient and critical exponent.
\newblock {\em Adv. Differential Equations}, 1, 1996, pp. 241--264.

\bibitem{Weimb} {\sc H.F. Weinberger}. 
\newblock Remark on the preceding paper of Serrin. 
\newblock {\em Arch Rat Mech Anal.}, 43, 1971, pp. 319--320. 
\end{thebibliography}
\end{document}